\newtheorem{theorem}{Theorem}[section]
\newtheorem{lemma}[theorem]{Lemma}
\newtheorem{claim}[theorem]{Claim}
\newtheorem{question}[theorem]{Question}
\theoremstyle{definition}
\begin{document}
\title{Vertex Arboricity of Toroidal Graphs with a Forbidden Cycle}
\author{Ilkyoo Choi\thanks{Department of Mathematics, University of Illinois at Urbana-Champaign. \texttt{ichoi4@illinois.edu}} \and Haihui Zhang\thanks{School of Mathematical Science, Huaiyin Normal University, Jiangsu, 223300, P. R. China. Research was supported by the NSFC Tianyuan foundation (Grant No. 11226285). This work was done while the author was visiting University of Illinois at Urbana-Champaign.}}
\date\today
\maketitle
\begin{abstract}
The vertex arboricity $a(G)$ of a graph $G$ is the minimum $k$ such that $V(G)$ can be partitioned into $k$ sets where each set induces a forest. 
For a planar graph $G$, it is known that $a(G)\leq 3$.
In two recent papers, it was proved that planar graphs without $k$-cycles for some $k\in\{3, 4, 5, 6, 7\}$ have vertex arboricity at most $2$. 
For a toroidal graph $G$, it is known that $a(G)\leq 4$. 
Let us consider the following question: do toroidal graphs without $k$-cycles have vertex arboricity at most $2$?
It was known that the question is true for $k=3$, and recently, Zhang proved the question is true for $k=5$. 
Since a complete graph on $5$ vertices is a toroidal graph without any $k$-cycles for $k\geq 6$ and has vertex arboricity at least three,  the only unknown case was $k=4$. 
We solve this case in the affirmative; namely, we show that toroidal graphs without $4$-cycles have vertex arboricity at most $2$. 
\end{abstract}

\section{Introduction}

Let $[n]=\{1, \ldots, n\}$. 
Only finite, simple graphs are considered. 
Given a graph $G$, let $V(G)$ and $E(G)$ denote the vertex set and edge set of $G$, respectively. 
The {\it vertex arboricity} of a graph $G$, denoted $a(G)$, is the minimum $k$ such that $V(G)$ can be partitioned into $k$ sets $V_1, \ldots, V_k$ where $G[V_i]$ is a forest for each $i\in[k]$. 
This can be viewed as a vertex coloring $f$ with $k$ colors where each color class $V_i$ induces a forest; namely, $G[f^{-1}(i)]$ is an acyclic graph for each $i\in [k]$. 
The {\it girth} of a graph $G$ is the length of the smallest cycle in $G$. 
Note that a graph with no cycles is a forest, and it has vertex arboricity 1. 

Vertex arboricity, also known as point arboricity, was first introduced by Chartrand, Kronk, and Wall \cite{1968ChKrWa} in 1968. 
Among other things, they proved Theorem~\ref{allplanar}. 
Shortly after, Chartrand and Kronk \cite{1969ChKr} showed that Theorem~\ref{allplanar} is sharp by constructing a planar graph with vertex arboricity $3$, and they also proved Theorem~\ref{outerplanar}.

\begin{theorem}\label{allplanar}\cite{1968ChKrWa}
If $G$ is a planar graph, then $a(G)\leq 3$. 
\end{theorem}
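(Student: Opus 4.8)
The plan is to derive Theorem~\ref{allplanar} from the well-known fact that planar graphs are $5$-\emph{degenerate} together with an elementary greedy coloring argument; recall that a graph is $d$-degenerate if every one of its subgraphs has a vertex of degree at most~$d$.

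First I would establish that every simple planar graph is $5$-degenerate. By Euler's formula, a simple planar graph on $n\geq 3$ vertices has at most $3n-6$ edges, so the sum of its degrees is less than $6n$ and hence it has a vertex of degree at most~$5$; since every subgraph of a planar graph is again planar, this applies to all subgraphs. Repeatedly deleting a vertex of minimum degree then produces an ordering $v_1,\dots,v_n$ of $V(G)$ in which each $v_i$ has at most $5$ neighbors among $v_1,\dots,v_{i-1}$; I will call these the \emph{back-neighbors} of $v_i$.

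Next I would color $v_1,\dots,v_n$ in this order with three colors, always giving $v_i$ a color that occurs on at most one of its back-neighbors. Such a color always exists: $v_i$ has at most $5$ back-neighbors, so if each of the three colors appeared on at least two of them we would need at least $6$ back-neighbors, a contradiction. Finally, to see that each color class $V_j$ induces a forest, restrict the ordering $v_1,\dots,v_n$ to $V_j$: every vertex of $V_j$ then has at most one back-neighbor lying in $V_j$, so $G[V_j]$ is $1$-degenerate and therefore acyclic. Thus $V(G)$ splits into three forests, giving $a(G)\leq 3$.

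There is no substantial obstacle in this argument; the only point requiring care is the counting step, where one must check that the general bound $a(G)\leq\lceil (d+1)/2\rceil$ for $d$-degenerate graphs specializes, at $d=5$, to exactly~$3$. The genuine difficulty of the paper lies not in this classical estimate but in the sharpened structural and discharging analysis needed to handle the toroidal, $4$-cycle-free setting, where Euler's formula on the torus gives no vertex of small degree for free.
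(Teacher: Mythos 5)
The paper does not supply its own proof of Theorem~\ref{allplanar}; it is quoted as a known result of Chartrand, Kronk, and Wall (1968), so there is no in-paper argument to compare against. Your proof is correct and is the standard one: planar graphs are $5$-degenerate by Euler's formula, a greedy coloring along a degeneracy ordering can always assign each vertex one of three colors that occurs on at most one of its (at most five) back-neighbors, and restricting the ordering to a color class shows that class is $1$-degenerate, hence acyclic. Your observation that this is the $d=5$ instance of the general bound $a(G)\leq\lceil (d+1)/2\rceil$ for $d$-degenerate graphs is also accurate, and that general bound is essentially the content of the cited 1968 theorem. No gaps.
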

\begin{theorem}\label{outerplanar}\cite{1969ChKr}
If $G$ is an outerplanar graph, then $a(G)\leq 2$. 
\end{theorem}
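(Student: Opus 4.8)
\medskip
\noindent\textbf{Proof strategy.} The plan is to induct on $|V(G)|$, exploiting the fact that outerplanar graphs always contain a low-degree vertex.

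The base case $|V(G)|\le 2$ is immediate, since then $G$ is a forest and $a(G)\le 1$. For the inductive step the essential preliminary is to show that every outerplanar graph with at least one vertex has a vertex of degree at most $2$. I would argue this by fixing an outerplanar embedding of $G$ and adding edges while preserving outerplanarity until the graph is maximal; a maximal outerplanar graph is a triangulation of a polygon, and a leaf of its weak dual (which is a tree) is a triangle having two sides on the outer boundary, so its third vertex has degree $2$ (the case where $G$ is a single triangle or smaller being trivial). Since deleting edges only lowers degrees, $G$ itself has such a vertex $v$.

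Given $v$ with $\deg_G(v)\le 2$, I would delete it. The graph $G-v$ is again outerplanar (outerplanarity is inherited by subgraphs) and smaller, so by the induction hypothesis $V(G-v)$ splits as $V_1\cup V_2$ with $G[V_1]$ and $G[V_2]$ both forests. Because $v$ has at most two neighbours, at least one part, say $V_1$, meets $N(v)$ in at most one vertex; placing $v$ into $V_1$ then keeps $G[V_1\cup\{v\}]$ a forest (attaching a vertex of degree $\le 1$ to a forest leaves a forest) and does not affect $G[V_2]$. Hence $V_1\cup\{v\}$ and $V_2$ witness $a(G)\le 2$, closing the induction.

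I do not anticipate a genuine obstacle: the only point needing a moment's care is the degree-two lemma for outerplanar graphs, while the extension of the colouring is the routine observation about attaching pendant-like vertices to forests. In fact the same argument yields $a(G)\le 2$ for every $2$-degenerate graph, outerplanarity entering only to guarantee $2$-degeneracy.
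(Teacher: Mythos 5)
Your proof is correct, but note that the paper does not actually prove Theorem~\ref{outerplanar}; it is cited from Chartrand and Kronk (1969) as background, so there is no in-paper argument to compare against. Your argument is the standard one: outerplanar graphs are $2$-degenerate (every nonempty outerplanar graph has a vertex of degree at most $2$, which you justify correctly via the ear of a maximal outerplanar triangulation, i.e.\ a leaf of the weak dual tree), and $2$-degeneracy yields a greedy vertex-arboricity-$2$ partition because a removed vertex of degree $\le 2$ can always be placed in whichever part contains at most one of its neighbours, and attaching a vertex along at most one edge to a forest keeps it a forest. The one step worth a sentence more of care is the passage from ``maximal outerplanar has a degree-$2$ vertex'' to ``every outerplanar graph has a degree-$2$ vertex'': one must observe that $G$ is a spanning subgraph of the completion $G'$, so a vertex of degree $2$ in $G'$ has degree at most $2$ in $G$ -- which you do state. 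Your closing remark that the argument actually proves $a(G)\le 2$ for all $2$-degenerate graphs is accurate and is in fact the cleanest way to phrase the result; outerplanarity enters only through the degeneracy bound.
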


We direct the readers to the work of Stein \cite{1971St} and Hakimi and Schmeichel \cite{1989HaSc} for a complete characterization of maximal plane graphs with vertex arboricity $2$. 

In 2008, Raspaud and Wang \cite{2008RaWa} not only determined the order of the smallest planar graph $G$ with $a(G)=3$, but also found several sufficient conditions for a planar graph to have vertex arboricity at most $2$ in terms of forbidden small structures; namely, they proved that a planar graph with either no triangles at distance less than $2$ or no $k$-cycles for some fixed $k\in\{3, 4, 5, 6\}$ has vertex arboricity at most $2$. 
Chen, Raspaud, and Wang \cite{2012ChRaWa} 
showed that forbidding intersecting triangles is also sufficient for planar graphs.
In \cite{2008RaWa}, Raspaud and Wang asked the following question:

\begin{question}\cite{2008RaWa}
What is the maximum integer $\mu$ where for all $k\in\{3, \ldots, \mu\}$, a planar graph $G$ with no $k$-cycles has $a(G)\leq 2$?
\end{question}

Raspaud and Wang's results imply $6\leq \mu\leq 21$. 
The lower bound was increased to $7$ by Huang, Shiu, and Wang \cite{2012HuShWa} since they proved planar graphs without $7$-cycles have vertex arboricity at most $2$. 

We completely answer the question for toroidal graphs, which are graphs that are embeddable on a torus with no crossings. 

Kronk \cite{1969Kr}  and Cook \cite{1974Co} investigated vertex arboricity on higher surfaces in 1969 and 1974, respectively. 

\begin{theorem}\label{higherSurfaces}\cite{1969Kr}
If $G$ is a graph embeddable on a surface of positive genus $g$, then $a(G)\leq \lfloor{9+\sqrt{1+48g}\over 4}\rfloor$. 
\end{theorem}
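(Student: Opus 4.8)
The plan is to run the classical Euler--formula argument that yields $a(G)\le 3$ for planar graphs, but with the genus-$g$ edge bound in place of $m\le 3n-6$, and then to optimize the resulting inequality over $k$.

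First I would record the standard reducibility step: if $v\in V(G)$ satisfies $d_G(v)\le 2k-1$ and $a(G-v)\le k$, then $a(G)\le k$. Indeed, given a partition $V(G-v)=V_1\cup\cdots\cup V_k$ into forests, the at most $2k-1$ neighbours of $v$ cannot meet all $k$ classes in two or more vertices each, so some $V_i$ contains at most one neighbour of $v$; placing $v$ in that class adds a vertex of degree at most $1$ to the forest $G[V_i]$ and so keeps it acyclic. Consequently, a graph $G$ of (orientable) genus at most $g$ that is vertex-minimal subject to $a(G)\ge k+1$ must have $\delta(G)\ge 2k$; hence $|E(G)|\ge k\,|V(G)|$, and since a simple graph of minimum degree $2k$ has more than $2k$ vertices, $|V(G)|\ge 2k+1$.

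Next I would bring in Euler's formula. For a simple graph on $n=|V(G)|\ge 3$ vertices embedded (cellularly, e.g.\ in minimum genus) on an orientable surface of genus $g$, from $n-m+f=2-2g$ and $2m\ge 3f$ one gets $m=|E(G)|\le 3n-6+6g$. Combining with $m\ge kn$ gives $(k-3)n\le 6(g-1)$; since the target value of $k$ is at least $4$ for every $g\ge 1$, we may divide to obtain $n\le \frac{6(g-1)}{k-3}$, and together with $n\ge 2k+1$ this forces $(2k+1)(k-3)\le 6(g-1)$, i.e.\ $(2k-3)(k-1)\le 6g$. Contrapositively: if $k\ge 4$ is an integer with $(2k-3)(k-1)>6g$, then no such minimal counterexample exists, so every genus-$g$ graph $G$ satisfies $a(G)\le k$.

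Finally I would solve $(2k-3)(k-1)>6g$. Expanding, this is $2k^2-5k+3-6g>0$, whose roots are $\frac{5\pm\sqrt{1+48g}}{4}$; as the leading coefficient is positive and the smaller root is negative for $g\ge 1$, the inequality holds precisely for integers $k>\frac{5+\sqrt{1+48g}}{4}$, the least of which is $\lfloor\frac{5+\sqrt{1+48g}}{4}\rfloor+1=\lfloor\frac{9+\sqrt{1+48g}}{4}\rfloor$ (and one checks this value is $\ge 4$ for all $g\ge 1$, equalling $4$ at $g=1$, so the side condition is automatic). This matches the stated bound. The argument is essentially routine once assembled; the only points needing attention are confirming the genus-$g$ edge bound applies in the required generality (simplicity, $n\ge 3$, reduction to a most economical embedding) and the elementary but slightly fussy floor-function bookkeeping that turns $(2k-3)(k-1)>6g$ into $\lfloor\frac{9+\sqrt{1+48g}}{4}\rfloor$, including the borderline cases where $\frac{5+\sqrt{1+48g}}{4}$ is itself an integer.
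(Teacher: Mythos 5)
The paper states this theorem as a citation to Kronk (1969) and does not reproduce a proof, so there is no in-paper argument to compare against; what follows is therefore an assessment of your reconstruction on its own terms.

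Your argument is correct, and it is the standard degeneracy-plus-Euler proof for this class of results. The reducibility step (a vertex of degree at most $2k-1$ has, by pigeonhole, some color class containing at most one neighbour, so it can be absorbed without creating a monochromatic cycle) is right, and it correctly yields that a vertex-minimal counterexample has $\delta(G)\ge 2k$, hence $|E(G)|\ge k|V(G)|$ and $|V(G)|\ge 2k+1$. Combining with the genus-$g$ edge bound and clearing denominators to $(2k-3)(k-1)\le 6g$, then solving the quadratic, is all correct; the identity $\lfloor x\rfloor+1=\lfloor x+1\rfloor$ handles the borderline case where $\tfrac{5+\sqrt{1+48g}}{4}$ is an integer, and the check that the target value is $4$ at $g=1$ validates the side condition $k\ge 4$ needed for the division. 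One small imprecision: when you invoke a cellular embedding ``in minimum genus,'' Euler's formula gives $n-m+f=2-2g'$ for that minimum genus $g'\le g$, not $2-2g$; the bound $m\le 3n-6+6g$ still follows because $g'\le g$ (equivalently, for an arbitrary, possibly non-cellular embedding one has $n-m+f\ge 2-2g$). This does not affect the conclusion. It would also be worth stating explicitly that the minimal counterexample is connected (disconnectedness would contradict minimality, since $a$ is the maximum over components) so that Euler's formula and the edge bound apply cleanly, but this is routine.
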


\begin{theorem}\label{higherSurfacesTri}\cite{1974Co}
If $G$ is a graph embeddable on a surface of genus $g$ with no $3$-cycles, then $a(G)\leq 2+\sqrt{g}$. 
\end{theorem}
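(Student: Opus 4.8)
The plan is to mimic the proof of Kronk's bound (Theorem~\ref{higherSurfaces}), but to exploit triangle-freeness twice: once to sparsify the graph through Euler's formula, and once to force a minimal counterexample to have \emph{many} vertices. Set $k=\lfloor 2+\sqrt g\rfloor$ and suppose for contradiction that $G$ is a triangle-free graph embeddable on a surface of genus $g$ with $a(G)>k$, chosen with $|V(G)|$ as small as possible. The case $g=0$ is immediate (a triangle-free planar graph is $3$-degenerate, hence $a(G)\le 2$), so I may assume $g\ge 1$ and therefore $k\ge 3$; a standard split-at-a-cut-vertex argument (colour the two sides so they agree on the cut vertex; no monochromatic cycle crosses it) lets me take $G$ to be $2$-connected, so that in a cellular embedding every face is bounded by a cycle.

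First I would establish the expected degree bound $\delta(G)\ge 2k$. If some vertex $v$ has $d(v)\le 2k-1$, then minimality gives a partition of $V(G-v)$ into $k$ induced forests, and since $v$ has at most $2k-1<2k$ neighbours, one of the $k$ parts contains at most one of them; placing $v$ in that part keeps it a forest, contradicting $a(G)>k$.

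Next comes the main point. On one side, any triangle-free graph of minimum degree $d$ has at least $2d$ vertices: fixing a vertex $v$ and a neighbour $u$, triangle-freeness makes $N(v)$ an independent set disjoint from $N(u)$, so $N(v)$, $\{v\}$, and $N(u)\setminus\{v\}$ are pairwise disjoint of sizes at least $d$, $1$, $d-1$; hence $|V(G)|\ge 4k$. On the other side, triangle-freeness plus Euler's formula give $|E(G)|\le 2|V(G)|-4+4g$, since each face has length at least $4$, so $4|F|\le 2|E|$ and $|V|-|E|+|F|=2-2g$. As $\delta(G)\ge 2k$ forces $|E(G)|\ge k\,|V(G)|$, I obtain $(k-2)\,|V(G)|\le 4g-4$; combining with $|V(G)|\ge 4k$ yields $4k(k-2)\le 4g-4$, i.e. $(k-1)^2\le g$, i.e. $k\le 1+\sqrt g$. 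Since $\lfloor 2+\sqrt g\rfloor>1+\sqrt g$ always, this contradicts $k=\lfloor 2+\sqrt g\rfloor$, and the proof is complete.

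The step I expect to require the most care is not any single inequality but making the order bound $|V(G)|\ge 4k$ interact cleanly with the genus bound: one must check that the minimum degree is genuinely pushed up to $2k\ge 6$ (so that the ``$\ge 2d$ vertices'' estimate is available and the arithmetic closes) and that the Euler estimate is applied to an honest cellular embedding, passing to a surface of smaller genus if the given embedding is not cellular, and separating off the degenerate small cases. The conceptual content is simply that triangle-freeness exerts two incompatible pressures on a minimal counterexample --- few edges per vertex, yet many vertices once the minimum degree is large --- and these collide precisely at $g=(k-1)^2$, which is what upgrades Kronk's $\sqrt{3g}$-type bound to the $\sqrt g$-type bound here.
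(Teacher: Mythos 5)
The paper does not prove Theorem~\ref{higherSurfacesTri}; it is quoted as background and attributed to Cook~\cite{1974Co}, so there is no in-paper argument to compare against. Judged on its own merits, your proof is correct and self-contained, and it is the natural degeneracy-plus-Euler argument one would expect for a bound of this type. The two estimates you combine are both sound: minimality forces $\delta(G)\ge 2k$ (pigeonhole on the at most $2k-1$ neighbours of a low-degree vertex), triangle-freeness then forces $|V(G)|\ge 2\delta(G)\ge 4k$ (the sets $N(v)$, $\{v\}$, $N(u)\setminus\{v\}$ are pairwise disjoint because $uv$ is an edge and there are no triangles), and the Euler bound $|E|\le 2|V|-4+4g$ together with $|E|\ge k|V|$ yields $(k-2)|V|\le 4g-4$, which collides with $|V|\ge 4k$ to give $(k-1)^2\le g$, contradicting $k=\lfloor 2+\sqrt g\rfloor>1+\sqrt g$. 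The base case $g=0$ via $3$-degeneracy and the cut-vertex reduction are also fine.

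One small imprecision worth fixing: on a surface of positive genus, $2$-connectivity does \emph{not} guarantee that every face of a cellular embedding is bounded by a cycle (e.g.\ the theta graph embedded cellularly on the torus has a single face whose boundary walk repeats every edge). What you actually need, and what is true, is weaker: in a simple, bridgeless, triangle-free graph every face boundary walk has length at least $4$ (length $1$ or $3$ would require a loop or a triangle, and length $2$ would require a multi-edge or a bridge), and $2$-connectedness, or even just $\delta(G)\ge 2$ combined with bridgelessness, suffices for that. Since $2$-connected implies $2$-edge-connected, your reduction does supply bridgelessness, so the argument goes through once the sentence is reworded to claim only the face-length bound. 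You might also note explicitly that a non-cellular embedding on genus $g$ can be replaced by a cellular one on some $g'\le g$, so the Euler inequality is used with the smaller genus and only strengthens the conclusion; you gesture at this but it deserves a sentence.
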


\begin{theorem}\label{higherSurfacesGirth}\cite{1974Co}
If $G$ is a graph embeddable on a surface of positive genus $g$ with girth at least $5+4\log_3 g$, then $a(G)\leq 2$. 
\end{theorem}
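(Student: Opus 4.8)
The plan is to prove the contrapositive by induction on $|V(G)|$, with the (orientable) genus $g\ge 1$ held fixed. If $G$ has no cycle it is a forest and $a(G)=1$; otherwise write $\ell$ for the girth of $G$, which is an integer satisfying $\ell\ge 5+4\log_3 g$ (in particular $\ell\ge 5$). Every proper subgraph of $G$ is again embeddable on the genus‑$g$ surface and has girth at least $5+4\log_3 g$, so the induction hypothesis applies to it; this is what drives the argument.

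Two standard reductions isolate the essential case. If $G$ is disconnected or has a cut vertex, split it at that vertex into two proper subgraphs $G_1,G_2$ meeting in at most one common vertex $x$; by induction each $G_i$ has a partition of its vertices into two forests, and after swapping the two parts on $G_2$ if necessary the partitions agree at $x$. If some part of the combined partition contained a cycle, that cycle would lie entirely inside $G_1$ or inside $G_2$ (it cannot pass through the single shared vertex $x$), contradicting that each part of $G_i$ induces a forest; hence $a(G)\le 2$. If instead $G$ has a vertex $v$ of degree at most $3$, apply induction to $G-v$ and extend: among the at most three neighbors of $v$ some part is used at most once, so placing $v$ in that part adds at most one edge to a forest and keeps it a forest. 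Hence it suffices to derive a contradiction under the assumption that $G$ is $2$-connected with minimum degree at least $4$.

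For such a $G$, two opposing estimates on $|V(G)|$ finish the proof. On one hand, the usual Euler-formula count for a $2$-connected graph of girth at least $\ell$ embeddable on a surface of genus $g$ gives $|E|\le\frac{\ell}{\ell-2}\bigl(|V|+2g-2\bigr)$, and combining this with $2|E|\ge 4|V|$ (minimum degree at least $4$) rearranges to $(\ell-4)|V|\le 2\ell(g-1)$. When $g=1$ this forces $|V|\le 0$, already a contradiction, so the toroidal case is done; when $g\ge 2$ it yields $|V|\le\frac{2\ell(g-1)}{\ell-4}$, a bound linear in $g$. On the other hand, a breadth-first search from any vertex, grown to depth $\lfloor(\ell-1)/2\rfloor$, is a tree (no cycle is shorter than $\ell$) with branching factor at least $3$, namely one less than the minimum degree, so the standard Moore-bound count gives $|V|>3^{(\ell-2)/2}$, a bound exponential in $\ell$ and hence of degree greater than one in $g$. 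Since $\ell\ge 5+4\log_3 g$, that is $g\le 3^{(\ell-5)/4}$, the two estimates combine to
\[
3^{(\ell-2)/2}\;<\;|V(G)|\;\le\;\frac{2\ell(g-1)}{\ell-4}\;<\;\frac{2\ell}{\ell-4}\,3^{(\ell-5)/4},
\]
that is $3^{(\ell+1)/4}<\frac{2\ell}{\ell-4}$; but $g\ge 2$ forces $\ell\ge 8$, so the left side is at least $3^{9/4}>10$ while the right side is at most $4$. This contradiction completes the induction.

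I expect the real content, as opposed to the two routine reductions, to be the recognition that Euler's formula confines $|V(G)|$ \emph{from above} in terms of $g$ precisely when $G$ has minimum degree at least $4$, while large girth confines it \emph{from below}, and that the threshold $5+4\log_3 g$ is exactly calibrated to make these collide for every $g\ge 1$: the base $3$ of the logarithm is the branching factor $(\text{minimum degree})-1$ of the search tree, while the additive constant $5$ absorbs the Euler factor $\frac{\ell}{\ell-2}$ together with the passage from average degree to minimum degree $4$. The one step demanding genuine care is checking this calibration; the crude Moore bound used above in fact leaves slack, so the same argument proves the theorem under a somewhat weaker girth hypothesis, though I will not chase the optimal constant here.
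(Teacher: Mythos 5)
Your proof is correct. The paper does not prove Theorem~\ref{higherSurfacesGirth} --- it only cites Cook (1974) --- so there is no in-paper argument to compare against; your argument (reduce to a $2$-connected graph of minimum degree at least $4$, then play the Euler--girth upper bound $(\ell-4)|V|\le 2\ell(g-1)$ against the Moore-type lower bound $|V|>3^{(\ell-2)/2}$) is the standard route and almost certainly the substance of Cook's original proof. The only step you leave implicit is the standard fact that in a cellular embedding every face degree is at least the girth (needed for $2|E|\ge \ell|F|$); this is routine for graphs of minimum degree at least $2$ and is the same convention the paper itself uses in its discharging section, so it is not a gap.
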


Theorem~\ref{higherSurfaces} says every toroidal graph $G$ has $a(G)\leq 4$.
Theorem~\ref{higherSurfacesTri} says a toroidal graph with no $3$-cycles has vertex arboricity at most $3$, and Theorem~\ref{higherSurfacesGirth} only guarantees that toroidal graphs with girth at least $5$ have vertex arboricity at most $2$. 
Both of these cases were improved by Kronk and Mitchem \cite{1974KrMi} who showed Theorem~\ref{toroNo3}.
Recently, Zhang \cite{00Zh} showed Theorem~\ref{toroNo5}, which says that forbidding $5$-cycles in toroidal graphs is sufficient to guarantee vertex arboricity at most $2$. 

\begin{theorem}\label{toroNo3}\cite{1974KrMi}. 
If $G$ is a toroidal graph with no $3$-cycles, then $a(G)\leq 2$. 
\end{theorem}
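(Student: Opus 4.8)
The plan is to argue by contradiction using a minimal counterexample together with discharging. Suppose the theorem fails and let $G$ be a triangle-free toroidal graph with $a(G)\geq 3$ having the fewest vertices; then $a(G-v)\leq 2$ for every $v\in V(G)$, and since vertex arboricity is determined componentwise, $G$ is connected. A first reduction pins down the minimum degree: if $\deg(v)\leq 3$, take a partition of $V(G-v)$ into two induced forests; among the at most three neighbors of $v$, some part contains at most one of them, so adding $v$ to that part creates no cycle, contradicting $a(G)\geq 3$. Hence $\delta(G)\geq 4$. One can squeeze more out of the same idea: if $v$ has degree $4$ and the two parts of a $2$-forest-partition of $G-v$ each contain exactly two neighbors of $v$, then recoloring a single neighbor and re-examining the extension forces a good deal of structure onto the second neighborhood of $v$; these local statements are the reducible configurations to be used at the end.

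Next I would bring in the topology. If $G$ is planar then, being triangle-free, $|E(G)|\leq 2|V(G)|-4$, which forces a vertex of degree at most $3$ and contradicts $\delta(G)\geq 4$; so $G$ has genus exactly $1$ and therefore a cellular embedding on the torus with $|V|-|E|+|F|=0$. Assign each vertex $v$ the charge $\deg(v)-4$ and each face $f$ the charge $\ell(f)-4$, where $\ell(f)$ is the facial length. The total charge is $\sum_v(\deg(v)-4)+\sum_f(\ell(f)-4)=(2|E|-4|V|)+(2|E|-4|F|)=-4(|V|-|E|+|F|)=0$. Since $G$ is triangle-free every face has length at least $4$, so every face has nonnegative charge, and since $\delta(G)\geq 4$ every vertex has nonnegative charge. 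As the total vanishes, every vertex has degree exactly $4$ and every face has length exactly $4$: $G$ is a $4$-regular triangle-free quadrangulation of the torus.

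It remains to show that no such graph is a counterexample, and this is the heart of the matter. Lifting the quadrangulation to the universal cover (the plane) yields a $4$-regular quadrangulation of the plane, which must be the square grid, so $G$ is obtained from $\mathbb{Z}^2$ by identifying vertices according to a finite-index rank-$2$ sublattice $\Lambda$, with $(1,0)$ and $(0,1)$ descending to the two edge directions and triangle-freeness becoming a mild condition on $\Lambda$. I would finish by producing, for every admissible $\Lambda$, a partition of $V(G)=\mathbb{Z}^2/\Lambda$ into two induced forests — for instance by cutting $G$ open along a shortest nonseparating cycle into a genuinely grid-like cylinder, $2$-coloring that cylinder in stripes chosen so that the boundary identification closes up no monochromatic cycle — or, in the discharging spirit, by combining the local structural facts from the first paragraph with a short case analysis on $\Lambda$ to exhibit in every case a vertex $v$ and a $2$-forest-partition of $G-v$ that extends to $G$. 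Either route contradicts $a(G)\geq 3$.

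The main obstacle is precisely this last step. The reduction $\delta(G)\geq 4$ and the discharging are routine, but the $4$-regular quadrangulation case is exactly where triangle-freeness must be used globally — a $4$-regular toroidal graph \emph{with} a triangle, such as $K_5$, has vertex arboricity $3$ — and getting a uniform handle on all square-grid quotients, or on the right reducible configuration inside them, is where the real work lies.
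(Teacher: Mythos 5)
This theorem is quoted in the paper from Kronk and Mitchem without proof, so there is no internal argument to compare against; I can only judge your proposal on its own terms. The reductions you carry out are sound: a vertex-minimal counterexample has $\delta(G)\geq 4$ (a vertex with at most three neighbours has some colour class containing at most one of them, and can be added there), and the charge count $\sum_v(d(v)-4)+\sum_f(\ell(f)-4)=-4(|V|-|E|+|F|)\leq 0$, combined with girth at least $4$, does force every vertex to have degree exactly $4$ and every face length exactly $4$ (modulo the routine verification that a minimal counterexample is $2$-connected, so that facial walks are cycles of length at least the girth). So a minimal counterexample would have to be a $4$-regular triangle-free quadrangulation of the torus.

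The proof is nevertheless incomplete, and you say so yourself: the entire content of the theorem is concentrated in the last step, and for it you offer a menu of strategies rather than an argument. Two things are missing. First, the classification claim that every $4$-regular quadrangulation of the torus is a translation quotient $\mathbb{Z}^2/\Lambda$ of the square grid needs justification: one must show the lift to the universal cover is the tessellation $\{4,4\}$ and that the deck group, a free $\mathbb{Z}^2$-action by automorphisms of the grid, consists of translations (point symmetries and glide-type symmetries must be excluded, as must quotients identifying the two endpoints of an edge, since $G$ is simple). Second, and more seriously, even granting the classification you must actually produce a two-forest partition for every admissible $\Lambda$, and the suggested ``stripe'' colouring does not obviously work: colouring a cylinder in monochromatic rows makes each colour class a union of non-contractible cycles once the ends are identified, so the stripes themselves must be broken, and how to do this uniformly over all shifts of the boundary identification is exactly the case analysis that constitutes the proof. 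As written, your argument establishes only that a counterexample would be a toroidal grid quotient, not that none exists.
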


\begin{theorem}\label{toroNo5}\cite{00Zh}. 
If $G$ is a toroidal graph with no $5$-cycles, then $a(G)\leq 2$. 
\end{theorem}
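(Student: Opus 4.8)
The plan is to argue by contradiction with the discharging method. Suppose the statement fails and let $G$ be a counterexample with $|V(G)|$ minimum: so $G$ is toroidal, has no $5$-cycle, and $a(G)\geq 3$, while every toroidal $C_5$-free graph with fewer vertices has vertex arboricity at most $2$. I would first record the cheap consequences of minimality. Every proper subgraph $H$ of $G$ admits a \emph{good partition}, meaning $V(H)=V_1\cup V_2$ with each $H[V_i]$ a forest. From this, $\delta(G)\geq 4$: if $d(v)\leq 3$, take a good partition of $G-v$ and insert $v$ into a part containing at most one of its neighbours (possible by pigeonhole), which keeps both parts forests; similarly $G$ is $2$-connected. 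One then reduces to the case that $G$ has a $2$-cell embedding on the torus (the usual genus bookkeeping; if no such embedding exists $G$ is essentially planar and the $C_5$-free case of the Raspaud--Wang result \cite{2008RaWa} applies). For a $2$-cell embedding on the torus $|V|-|E|+|F|=0$, so assigning charge $\mu(x)=d(x)-4$ to each vertex $x$ and $\mu(f)=\ell(f)-4$ to each face $f$ (here $\ell(f)$ is the length of the boundary walk) gives $\sum_x\mu(x)+\sum_f\mu(f)=0$. The only elements with negative charge are the triangular faces, each carrying $-1$; vertices of degree $\geq 5$ and faces of length $\geq 6$ carry the positive charge.

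Next I would extract the structure forced by the absence of $5$-cycles. There is no $5$-face. More importantly, no three consecutive faces around a vertex can all be triangles: if the faces in the corners between $vv_1,vv_2$, between $vv_2,vv_3$, and between $vv_3,vv_4$ were all triangular, then $v_1vv_4v_3v_2$ would be a $5$-cycle (the endpoints $v_1,\dots,v_4$ are distinct since $\delta(G)\geq 4$). Consequently a vertex of degree $d$ lies on at most $\lfloor 2d/3\rfloor$ triangular faces, and (with a small $K_4$-type exception) a triangular face shares an edge with at most one other triangular face. On top of these combinatorial facts I expect to need a short list of \emph{reducible configurations} — bounded subgraphs that cannot occur in $G$ — each proved in the standard way by deleting a small piece, invoking minimality for a good partition of the rest, and extending it. The configurations will be dictated by the discharging below; the prototype is ``a triangular face $xyz$ such that the number of its vertices of degree $\geq 5$ plus the number of its edges lying on a face of length $\geq 6$ is at most $2$'', together with refinements handling adjacent triangular faces and degree-$4$ vertices incident to two triangles.

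With these in hand I would set up discharging rules that push the positive charge onto the triangular faces. A natural attempt: each vertex of degree $\geq 5$ sends $\tfrac13$ to each incident triangular face, and each face of length $\geq 6$ sends $\tfrac13$ across each edge it shares with a triangular face; the fractions are then tuned against the reducible configurations. Using ``no three consecutive triangular faces at a vertex'', a degree-$d$ vertex sends at most $\tfrac13\lfloor 2d/3\rfloor\leq d-4$, and a face of length $\ell\geq 6$ sends at most $\tfrac{\ell}{3}\leq \ell-4$, so every donor keeps nonnegative charge, with strict surplus unless it is a $5$-vertex or a $6$-face; the reducible configurations are exactly what guarantees every triangular face collects a full unit. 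Since the total charge is $0$ and everything ends nonnegative, every element must end with charge exactly $0$. In the cleanest scenario there are no triangular faces at all, forcing $G$ to be a $4$-regular quadrangulation of the torus; more generally the analysis pins $G$ down to a rigid list of extremal structures. The final step is to dispose of these by hand: a $4$-regular $C_5$-free quadrangulation of the torus is essentially bipartite, and one exhibits a good partition outright, contradicting $a(G)\geq 3$.

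The main obstacle, I expect, is the middle of the argument. Forbidding $5$-cycles is a far weaker local restriction than forbidding $3$-cycles, so triangular faces genuinely persist and can be flanked by $4$-faces and degree-$4$ vertices, which carry no charge; routing a full unit of charge to every triangular face therefore demands a delicately balanced rule set together with a nontrivial inventory of reducible configurations, each requiring its own partition-extension argument. A secondary difficulty, peculiar to the torus, is that Euler's formula leaves no slack — the total charge is exactly $0$, not negative — so the proof cannot simply terminate with ``some element has positive charge''; one must either engineer a strict surplus somewhere or carry out the separate, direct treatment of the extremal quadrangulation-type cases.
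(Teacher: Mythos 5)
This statement is Theorem~\ref{toroNo5}, which the paper only cites from Zhang \cite{00Zh}; there is no proof of it in this paper to compare against, so your proposal has to stand on its own. As it stands it is a strategy outline rather than a proof, and the gap is concentrated exactly where you predict it would be: the reducible configurations are never specified, let alone proved reducible. Your "prototype" configuration --- a triangular face whose count of incident $\geq 5$-vertices plus edges shared with $\geq 6$-faces is at most $2$ --- is precisely the kind of structure (e.g.\ a triangle with three degree-$4$ vertices surrounded by $3$- and $4$-faces) whose reducibility is the entire content of such a theorem, and "deleting a small piece and extending a good partition" is far from routine here: extending a forest $2$-coloring is much more delicate than extending a proper coloring, because the extension can close monochromatic cycles through the deleted piece in many ways (compare the effort this paper spends on Lemmas~\ref{partial} and~\ref{reducible} for a single configuration in the $C_4$-free case). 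Writing "the fractions are then tuned against the reducible configurations" and "the reducible configurations are exactly what guarantees every triangular face collects a full unit" is circular: the rules are justified by unnamed configurations whose reducibility is in turn assumed to make the rules work.

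Two further concrete problems. First, your endgame is not carried out: since the total charge on the torus is exactly $0$, you must either locate a strict surplus or classify and directly handle all charge-tight graphs, and the claim that the tight case reduces to "a $4$-regular $C_5$-free quadrangulation of the torus, which is essentially bipartite" is both unproved and doubtful --- quadrangulations of the torus need not be bipartite, and forbidding $5$-cycles does not forbid odd cycles of other lengths, so you cannot conclude $a(G)\leq 2$ from a bipartition you have not exhibited. Second, even the preliminary reduction "if $G$ has no $2$-cell embedding then $G$ is essentially planar" needs an argument. The correct skeleton is there (minimal counterexample, $\delta\geq 4$, the no-three-consecutive-triangles observation from $C_5$-freeness, vertex/face charges summing to $0$), but the proof's substance --- a concrete list of configurations with genuine partition-extension arguments, and a resolution of the zero-slack case --- is missing.
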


Since the complete graph on $5$ vertices is a toroidal graph with no cycles of length at least $6$ and has vertex arboricity $3$, the only remaining case is when $4$-cycles are forbidden in toroidal graphs; this is our main result. 

\begin{theorem}\label{result}
If $G$ is a toroidal graph with no $4$-cycles, then $a(G)\leq 2$. 
\end{theorem}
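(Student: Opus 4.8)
The plan is to argue by contradiction with the discharging method. Let $G$ be a counterexample minimizing $|V(G)|+|E(G)|$, so $a(G)\ge 3$ while every toroidal graph with no $4$-cycle and fewer vertices-plus-edges has vertex arboricity at most $2$. We may assume $G$ is connected. First I would clear away the easy reducible configurations. If $G$ has a cut vertex $v$, split $G=G_1\cup G_2$ along $v$, colour each $G_i$ by minimality, permute colours so $v$ gets colour $1$ in both, and observe that the union is a valid colouring (any monochromatic cycle would lie entirely inside one $G_i$), a contradiction; so $G$ is $2$-connected. If $\deg(v)\le 3$ for some $v$, colour $G-v$: some colour appears at most once on $N(v)$, and placing $v$ in that colour adds a vertex of degree at most $1$ to a forest, a contradiction; so $\delta(G)\ge 4$. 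Finally, we may assume the embedding of $G$ on the torus is cellular, since otherwise $G$ is planar and $a(G)\le 2$ by the result of Raspaud and Wang on planar graphs without $4$-cycles \cite{2008RaWa}; hence $|V(G)|-|E(G)|+|F(G)|=0$.

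Next I would record the structural consequences of having no $4$-cycle: any two vertices have at most one common neighbour and every edge lies in at most one triangle, so no two triangular faces share an edge, at most $\lfloor\deg(v)/2\rfloor$ of the faces at $v$ are $3$-faces, there are no $4$-faces, and the face across each edge of a $3$-face has length at least $5$. The key reducible configuration is \emph{a $3$-face $xyz$ all three of whose vertices have degree $4$}. Writing $N(x)=\{y,z,a,b\}$, $N(y)=\{x,z,c,d\}$, $N(z)=\{x,y,e,f\}$, the forbidden $4$-cycles force $a,b,c,d,e,f$ to be six distinct vertices outside $\{x,y,z\}$, none of $a,b$ adjacent to $y$ or $z$, and symmetrically. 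By minimality $G-\{x,y,z\}$ has a colouring with forest classes $T_1,T_2$; since $xyz$ is a triangle, an extension must give $x,y,z$ a $2$--$1$ split of the two colours, leaving six candidate extensions. For each candidate one pins down exactly which ``bad events'' --- two prescribed vertices among $a,\dots,f$ receiving the same colour and lying in a common tree of $T_1$ or $T_2$ --- would create a monochromatic cycle, and then shows these events cannot all hold across the six candidates, using that each of $x,y,z$ has only two neighbours off the triangle and that these six neighbours are distinct and miss the opposite triangle vertices. This extension analysis is the technical core and the step I expect to be the main obstacle.

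With these configurations excluded, I would run the discharging. Give each vertex $v$ charge $\deg(v)-4$ and each face $g$ charge $\ell(g)-4$; by Euler's formula the total is $0$, every vertex starts nonnegative (as $\delta(G)\ge 4$), every $3$-face starts at $-1$, and every other face starts at least $1$. Use two rules: (R1) each $3$-face takes $\tfrac16$ from the face on the other side of each of its three edges; (R2) each $3$-face takes $\tfrac1{2k}$ from each incident vertex of degree at least $5$, where $k\ge 1$ is the number of such vertices (nonzero by the reducibility above). Then each $3$-face finishes at $-1+\tfrac12+\tfrac12=0$; a face of length $\ell\ge 5$ sends out at most $\ell/6$ and finishes at $\tfrac{5\ell-24}{6}\ge\tfrac16>0$; and a vertex of degree $d\ge 5$ lies on at most $\lfloor d/2\rfloor$ triangular faces, sending at most $\tfrac12$ to each, so it finishes with at least $d-4-\tfrac12\lfloor d/2\rfloor\ge 0$. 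Thus all final charges are nonnegative. Since $G$ is not a triangulation --- a torus triangulation with $\delta\ge 4$ has a vertex whose link cycle produces a $4$-cycle --- $G$ has a face of length at least $5$, whose final charge is strictly positive; hence the total charge is positive, contradicting that it equals $0$. So no counterexample exists, and every toroidal graph without $4$-cycles has vertex arboricity at most $2$.
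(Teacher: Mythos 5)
Your overall plan (minimal counterexample, $2$-connected, $\delta\ge4$, cellular embedding, Euler, discharging) matches the paper's setup, but after that your route diverges substantially and, unfortunately, the divergence opens a genuine gap at the step you yourself flag as the ``technical core.''

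Your entire discharging scheme rests on the claim that a $3$-face $xyz$ with $\deg(x)=\deg(y)=\deg(z)=4$ is reducible; this is what guarantees every triangular face has at least one vertex of degree $\ge5$, so that rule (R2) can collect the missing $\tfrac12$. But as stated, this configuration is \emph{not} reducible by the naive extension argument, and you give no mechanism for recolouring $G-\{x,y,z\}$. Concretely: write $N(x)\setminus\{y,z\}=\{a,b\}$, $N(y)\setminus\{x,z\}=\{c,d\}$, $N(z)\setminus\{x,y\}=\{e,f\}$ (six distinct vertices, as you note). Suppose the good $2$-colouring $f$ of $G-\{x,y,z\}$ happens to put all of $a,b,c,d,e,f$ in colour $1$ and in a single tree $T$ of $f^{-1}(1)$. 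Every one of your six candidate extensions must assign colour $1$ to at least one vertex of the triangle (since all-colour-$2$ is forbidden by the triangle); but whichever triangle vertex receives colour $1$, its two external neighbours are both colour $1$ and in the same tree $T$, so attaching it closes a monochromatic cycle. Thus all six candidates fail simultaneously. Nothing in your sketch rules out this situation or shows how to repair the colouring of $G-\{x,y,z\}$, so the discharging cannot get off the ground: a triangular face all of whose vertices have degree $4$ would finish at $-\tfrac12$, and the total-charge contradiction collapses. This is precisely why the paper does something more elaborate: it proves reducibility only for a $4$-regular \emph{triangular cycle} (a cycle together with a pendant triangle, all of degree $4$), via Lemmas~\ref{partial} and~\ref{reducible}, and then absorbs the surviving degree-$4$-in-two-triangles vertices (``bad'' vertices) through the auxiliary graph $H(G)$ and a bank-based discharging with $\mu(v)=d(v)-6$, $\mu(f)=2d(f)-6$. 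In short, the single local configuration you pick is too weak to be reducible, and the rest of your argument depends on it.

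Two smaller remarks, neither fatal: (i) your non-triangulation claim is correct (in any simple triangulation each edge lies in two triangles $uvw_1$, $uvw_2$ with $w_1\ne w_2$, giving the $4$-cycle $uw_1vw_2$), so a face of length $\ge5$ does exist; (ii) your handling of the non-cellular case by falling back to the planar theorem of Raspaud and Wang is fine. But until the reducibility of the all-degree-$4$ triangle is replaced by something that actually holds (or the discharging is redesigned, as in the paper, so that it never needs that configuration to be reducible), the proof is incomplete.
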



In section $2$, we will prove some structural lemmas needed in Section $3$, where we prove Theorem~\ref{result} using (simple) discharging rules. 
Note that Theorem~\ref{result} implies that every planar graph without $4$-cycles have vertex arboricity at most $2$, which is a result in \cite{2008RaWa}.


\section{Lemmas}

From now on, let $G$ be a counterexample to Theorem~\ref{result} with the fewest number of vertices. 
It is easy to see that $G$ must be $2$-connected and the minimum degree of a vertex of $G$ is at least $4$. 

A graph is $k$-regular if every vertex in the graph has degree $k$.
A set $S\subseteq V(G)$ of vertices is $k$-regular if every vertex in $S$ has degree $k$ in $G$.
A {\it triangular cycle} is a cycle adjacent to a triangle.
A (partial) 2-coloring $f$ of $G$ is {\it good} if each color class induces a forest. 

\begin{lemma}\label{partial}
If $V(G)$ contains a 4-regular set $S$ where $G[S]$ is a cycle $C$, then every good coloring $f$ of $G[V(G)\setminus S]$ that does not extend to all of $G$ has either 
\begin{enumerate}[\mbox{Case} 1:]
\item $f(v)$ the same for every vertex $v\not\in S$ that has a neighbor in $S$, or
\item $f(x)\neq f(y)$ for all $v\in S$ such that $N(v)\setminus S=\{x, y\}$ and $C$ is an odd cycle.
\end{enumerate}
\end{lemma}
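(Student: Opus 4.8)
The plan is to argue by contradiction in the following form: fix a good $2$-coloring $f$ of $G[V(G)\setminus S]$ that does not extend to $G$, assume that Case~1 fails, and show that then Case~2 must hold. Write $C=v_1v_2\cdots v_nv_1$ and, for each $i$, let $N(v_i)\setminus S=\{x_i,y_i\}$; note that $x_i\ne y_i$ and $x_i,y_i\notin S$. Extending $f$ to $G$ amounts to choosing a color for each $v_i$ so that both color classes of the resulting coloring of $G$ induce forests. Since $f$ is already good on $G-S$, any monochromatic cycle created by such an extension must meet $S$, and its trace on $S$ is a disjoint union of subpaths of $C$ (possibly all of $C$) joined up by monochromatic paths running through $G-S$. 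The hypothesis that $G$ has no $4$-cycle will be used throughout to keep these subpaths and the connecting paths short and local near $C$.

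The engine of the proof is a greedy coloring along $C$. Delete one edge $v_kv_{k+1}$ to turn $C$ into a path, and color its vertices one at a time from $v_{k+1}$ to $v_k$, at each step picking a color that does not complete a monochromatic cycle in the graph colored so far. The key point is that when we reach a vertex $v_i$, its already-colored neighbors lie in a set of size at most three (its path-predecessor together with $x_i$ and $y_i$), and a color $c$ is forbidden for $v_i$ only if at least two vertices of that set have color $c$ and lie in a common tree of the current color-$c$ forest; three vertices cannot be split evenly between the two colors, so at every step at least one color is available and the path can always be colored in this way. Moreover a vertex has \emph{two} available colors unless its already-colored neighborhood contains two same-colored vertices lying in a common tree --- in particular the starting vertex $v_{k+1}$ has two choices whenever $f(x_{k+1})\ne f(y_{k+1})$, or when $f(x_{k+1})=f(y_{k+1})$ but these two vertices lie in different trees of that color.

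What remains --- and this is where essentially all the difficulty lies --- is to ``close up'' the deleted edge $v_kv_{k+1}$. If the process can be steered so that $v_k$ and $v_{k+1}$ receive different colors, then $v_kv_{k+1}$ is bichromatic, belongs to neither color class, and creates no monochromatic cycle, so $f$ extends, a contradiction; more generally it suffices that $v_k$ and $v_{k+1}$ end up either in different colors or in different trees of their common color. The two sources of flexibility are the choice of the cut edge $v_kv_{k+1}$ and the choices made at vertices with two available colors. The obstruction to exploiting this flexibility is a ``frozen'' pass in which every vertex is forced and the forced colors alternate around $C$, so that closing up fails exactly when $n$ is odd. The plan is to show that such a frozen pass can occur only when \emph{every} $v_i$ satisfies $f(x_i)\ne f(y_i)$ and $n$ is odd, which is precisely Case~2: concretely, if some $v_b$ has $f(x_b)=f(y_b)$ then cutting so that $v_b$ is a path endpoint breaks the propagation, while if $n$ is even then --- now also using that Case~1 fails, so that the outside-neighbors of $S$ do not all share a color --- the parity lets us close the cycle up; in either situation $f$ would extend, contradicting our choice of $f$. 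The principal obstacle I anticipate is the bookkeeping involved here: one must track which trees of the two forests the vertices of $S$ attach to as the greedy pass proceeds, in order to certify that no monochromatic cycle closes up through $G-S$, and it is exactly at this point that the absence of $4$-cycles in $G$ is needed to make the analysis finite and manageable.
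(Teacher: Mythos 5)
Your greedy approach is genuinely different from the paper's, but it is left incomplete at precisely the point you identify as the difficulty. The paper does not do a local greedy pass at all; it gives a one-shot explicit construction: for every index $i$ with $f(x_i)=f(y_i)$, precolor $v_i$ with the opposite color, and then, starting from one such index $j$ (which exists once the all-alternating even case is dispatched separately), fill in the remaining $v_i$'s in cyclic order by alternating from $v_{i-1}$. The point of this construction is that it makes the global verification trivial: one checks that $f(x_i)=f(v_i)=f(y_i)$ never occurs and that $f(v_i)=f(v_{i+1})$ forces $f(x_{i+1})=f(y_{i+1})\neq f(v_{i+1})$, so any monochromatic cycle must be $C$ itself, which can only happen in Case~1. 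No tree-tracking or bookkeeping of forests in $G-S$ is needed at all.

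Your plan, by contrast, defers the real content to ``closing up'' the cut edge, and you explicitly concede that you have not done this. Two concrete issues arise there. First, your claim that a frozen pass forces $f(x_i)\neq f(y_i)$ for every $i$ is not justified and is doubtful: a vertex $v_i$ with $f(x_i)=f(y_i)$ but $x_i,y_i$ in different trees of that color class can still be forced (when $f(v_{i-1})$ matches and lies in the tree of $x_i$, say), so a frozen alternating pass does not obviously put you in Case~2. Second, you invoke the absence of $4$-cycles in $G$ several times as the mechanism that ``keeps paths short and local'' and ``makes the analysis finite''; the paper's proof of this lemma makes no use of $4$-cycle-freeness whatsoever (it is a property of the minimal counterexample $G$ that is exploited only in the discharging part), so its appearance here is a signal that the closing argument has not actually been worked out. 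As written, the proposal is a plausible-sounding sketch with a real gap, and the gap is exactly where the lemma's two cases have to be extracted.
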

\begin{proof}
Let $S=\{v_1, \ldots, v_s\}$ where $v_1, \ldots, v_s$ are the vertices of $C$ in this order. 
For each $i\in [s]$, let $\{x_i, y_i\}=N(v_i)\setminus S$. 
Obtain a good coloring $f$ of $G[V(G)\setminus S]$ by the minimality of $G$. 
We will show that if $f$ does not satisfy one of the two conditions in the statement, then $f$ can be extended to all of $G$. 

If $s$ is even and $\{f(x_i), f(y_i)\}=\{1, 2\}$ for each $i\in[s]$, then let $f(v_i)=
\begin{cases}
1 & \mbox{if $i$ is odd}\\
2 & \mbox{if $i$ is even}
\end{cases}$ 
to extend $f$ to all of $G$.

We know that there exists at least one index $j\in[s]$ where $f(x_j)= f(y_j)$ since we are not in Case 2. 
For each $i\in [s]$ where $f(x_i)=f(y_i)$, let $f(v_i)=
\begin{cases}
1 & \mbox{ if $f(x_i)=f(y_i)=2$ }\\
2 & \mbox{ if $f(x_i)=f(y_i)=1$}
\end{cases}$.
Now, consider the vertices of $C$ in cyclic order starting with $i=j$, and for $f(v_i)$ that is not defined yet, let $f(v_i)=
\begin{cases}
1 & \mbox{ if $f(v_{i-1})=2$ }\\
2 & \mbox{ if $f(v_{i-1})=1$}
\end{cases}$ for all $i$. 
We claim that this coloring $f$ is now a good coloring of all of $G$, which is a contradiction.

Note that $f$ cannot have a monochromatic cycle that only uses vertices of $V(G)\setminus S$. 
Also, $f$ cannot have a monochromatic cycle where $x_i, v_i, y_i$ are consecutive vertices on this cycle since $f(x_i)=f(v_i)=f(y_i)$ never happens. 
Moreover, $f$ cannot have a monochromatic cycle where $v_i, v_{i+1}, x_i$ are consecutive vertices on this cycle since $f(v_i)=f(v_{i+1})$ implies that $f(x_{i+1})=f(y_{i+1})\neq f(v_{i+1})$. 
Thus, a monochromatic cycle in $f$ must be $C$ itself, which is possible only in Case 1. 
\end{proof}

\begin{lemma}\label{reducible}
$V(G)$ does not contain a 4-regular set $S$ where $G[S]$ is a triangular cycle. 
\end{lemma}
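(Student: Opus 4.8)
The plan is to derive a contradiction from Lemma~\ref{partial}. Suppose $V(G)$ contains such a set $S$, write the cycle as $C=v_1v_2\cdots v_sv_1$, and let $u$ be the apex of the attached triangle, so that $uv_1,uv_2\in E(G)$ and $S=V(C)\cup\{u\}$. First I would record the local structure forced by the absence of $4$-cycles: $C$ has no chord, $u$ has no neighbor in $V(C)\setminus\{v_1,v_2\}$, and $s\ge 5$; moreover, since $S$ is $4$-regular, $u$ has exactly two neighbors $a,b$ outside $S$, the vertex $v_1$ has exactly one neighbor $p$ outside $S$, the vertex $v_2$ has exactly one neighbor $q$ outside $S$, and each $v_i$ with $3\le i\le s$ has exactly two neighbors outside $S$. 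In particular $S':=V(C)$ is a $4$-regular set of $G$ with $G[S']=C$, so Lemma~\ref{partial} applies to $S'$; note that $N(v_1)\setminus S'=\{u,p\}$ and $N(v_2)\setminus S'=\{u,q\}$.

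By minimality of $G$, the graph $G-S'$ has a good coloring $f$, and since $G$ has no good coloring, $f$ does not extend to $G$; hence $f$ satisfies Case~1 or Case~2 of Lemma~\ref{partial}. In Case~1, $f$ is constant, say equal to $1$, on every neighbor of $S'$ outside $S'$; in particular $f(u)=f(p)=f(q)=1$, and every $v_i$ with $i\ge 3$ has both of its outside neighbors colored $1$. In Case~2, $f(u)\ne f(p)$ and $f(u)\ne f(q)$, so $f(p)=f(q)\ne f(u)$, and $C$ is odd; say $f(u)=1$ and $f(p)=f(q)=2$.

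The key point is that recoloring $u$ with the color different from $f(u)$ takes $f$ out of both cases: in Case~1 the vertex $u$ would then disagree with $p$, so $f$ would no longer be constant on the neighbors of $S'$ outside $S'$; in Case~2 the pair $N(v_1)\setminus S'=\{u,p\}$ would become monochromatic, so the outside pair of $v_1$ would no longer be split. Thus, if this recoloring keeps each color class a forest, then by (the contrapositive of) Lemma~\ref{partial} the recolored $f$ extends to all of $G$, contradicting the choice of $G$. The recoloring can fail only if it creates a monochromatic cycle through $u$; since the only neighbors of $u$ in $G-S'$ are $a$ and $b$, this forces $f(a)=f(b)$ to equal the new color of $u$ and forces $a,b$ to lie in a common tree of that color class of $f$, the offending cycle $Z$ being $u$ together with a monochromatic path from $a$ to $b$ in $G-S'-u$. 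Since $G$ has no $4$-cycle, $Z$ is either the triangle $uab$ or has length at least $5$; in this remaining situation I would finish by a further local recoloring along $Z$ — for instance, when $Z=uab$, observing that $u$ is then an isolated vertex of the color class of $f$ containing it (its two $G-S'$-neighbors $a,b$ have the other color), so recoloring $a$ (or $b$) rather than $u$ produces a good coloring of $G-S'$ that one checks lies in neither Case~1 nor Case~2.

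The step I expect to be the main obstacle is exactly this last one: tracking the ``crossing'' monochromatic cycles — those that leave $V(C)$ through an edge to a vertex outside $S$ — and verifying that recoloring $u$, or a vertex of the bad cycle $Z$, really does keep every color class acyclic. The no-$4$-cycle hypothesis is what forces the neighborhoods of $u,v_1,v_2,a,b$ to be sparse enough (no two of these vertices share two common neighbors) for the recolorings to go through; the combinatorial heart is the same as in the proof of Lemma~\ref{partial}, now carried out with the even cycle $v_1uv_2v_3\cdots v_sv_1$ in place of the odd cycle $C$.
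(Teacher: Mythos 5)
Your overall plan — apply Lemma~\ref{partial} to the plain cycle $C=S\setminus\{u\}$ and escape both cases by recoloring $u$ — is in the spirit of the paper's proof, but you have left a genuine gap exactly at the step you flag as the obstacle, and the hand-waving there cannot be finished in the form you propose.

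The precise problem: you flip $u$ \emph{before} attempting to color $C$, so you have no information about the colors of $u$'s two outside neighbors $a,b$. If they both have the opposite color from $u$ and already lie in one tree of that color class in $G-S'-u$, the flip destroys the forest property, and you are stuck with a monochromatic cycle $Z$ through $u,a,b$. Your suggested fix (recolor $a$, say) is not sound: $a$ may have several neighbors already of $u$'s original color lying in a common tree, so recoloring $a$ just moves the monochromatic cycle; and $a$ may itself be adjacent to $C$, so recoloring it can put $f$ back into Case~1 or Case~2 of Lemma~\ref{partial}. For $Z$ of length $\ge 5$ you offer no argument at all. In short, ``then do a local recoloring'' is the entire difficulty, not a detail.

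The paper resolves this by reversing the order of operations and extracting information from the failure. In its Case~1 it first colors $C$ by $f(v_1)=1$, $f(v_i)=2$ for $i\ge 2$; the only possible monochromatic cycle must pass through $v_1',v_1,u,z$ with $z\in N(u)\setminus\{v_1,v_2\}$ and $f(z)=1$, so one learns that at least one of $a,b$ has color $1$, and that is exactly what makes the subsequent flip $f(u)=2$ safe (after the flip, $u$ has only one color-$2$ neighbor outside $C$, and all of $v_2,\dots,v_s$ are leaves of the color-$2$ subgraph). In its Case~2 it similarly first tries one $C$-coloring with $u$ flipped to $1$; if that fails the failure forces $f(a)=f(b)=1$, which makes a second coloring (with $u$ back at $2$ and the colors on $C$ swapped) work. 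So the paper never re-invokes Lemma~\ref{partial} on a recolored $f$; it always finishes by a direct verification whose correctness is guaranteed by what the failed attempt revealed about $a,b$. To repair your write-up you would have to add exactly this kind of ``if the flip is bad, then $a,b$ have the forbidden color, in which case the unflipped $f$ already extends by such-and-such explicit assignment'' — at which point you would essentially be reproducing the paper's argument rather than routing everything through a second application of Lemma~\ref{partial}.

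Two smaller remarks. Your Case~1/Case~2 bookkeeping before the flip is correct, and so is your observation that after the flip $f$ is in neither case of Lemma~\ref{partial}; that part matches the structure of the paper's proof. Also, the preliminary structural facts you list (no chord in $C$, $u$ has only $v_1,v_2$ as neighbors in $S$) do hold, but in the paper they come for free from the hypothesis ``$G[S]$ is a triangular cycle,'' not from the no-$4$-cycle assumption.
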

\begin{proof}
Let $S=\{v_1, \ldots, v_s, u\}$, so that $u, v_1, v_2$ are the vertices of a triangle and let $C=S\setminus\{u\}$.
Let $v_1, \ldots, v_s$ be the vertices of $C$ in this order. 
For $i\in[2]$, let $v'_i$ be the neighbor of $v_i$ that is not in $S$.
We will obtain a good coloring of all of $G$ to show that $S$ does not exist. 
Obtain a good coloring $f$ of $G[V(G)\setminus C]$ by the minimality of $G$.

Assume that the first case of Lemma~\ref{partial} happens and without loss of generality, assume $f(v)=1$ for every vertex $v\not\in C$ that has a neighbor in $C$. 
For $i\in[s]\setminus\{1\}$, let $f(v_i)=2$ and let $f(v_1)=1$. 
If $f$ is not a good coloring, then in the graph induced by $f^{-1}(1)$, there must exist a cycle  where $v'_1, v_1, u, z$ are consecutive vertices on the cycle for some $z\in N(u)\setminus\{v_1, v_2\}$. 
Now, alter $f$ by letting $f(u)=2$ to obtain a good coloring of all of $G$.

Assume that the second case of Lemma~\ref{partial} happens and without loss of generality, assume $f(v'_1)=f(v'_2)=1$ and $f(u)=2$. 
Note that $s$ must be odd. 
For $i\in [s]\setminus\{1\}$, let $f(v_i)=
\begin{cases}
1 & \mbox{if $i$ is odd}\\
2 & \mbox{if $i$ is even}
\end{cases}$, let $f(v_1)=2$, and change $f(u)$ from $2$ to $1$.
If $f$ is not a good coloring, then in the subgraph induced by $f^{-1}(1)$, there must exist a cycle where $u$ and two of its neighbors that are not $v_1, v_2$ are consecutive vertices on the cycle.
Now for $i\in[s]\setminus\{1\}$, alter $f$ by letting $f(v_i)=
\begin{cases}
2 & \mbox{if $i$ is odd }\\
1 & \mbox{if $i$ is even}
\end{cases}$ (but keep $f(u)=2$) to obtain a good coloring of all of $G$.
\end{proof}

%
%

\begin{figure}[h]
	\begin{center}
		\input{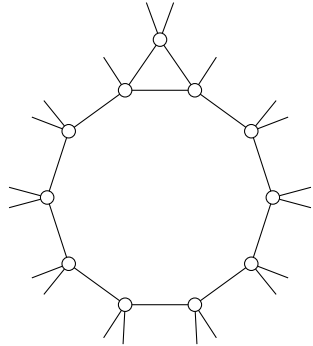}
	\end{center}
  \caption{Forbidden Configuration. 
The white vertices do not have incident edges besides the ones drawn.}
  \label{fig:tikz:reducible}
\end{figure}

Let a vertex $v$ be {\it bad} if $d(v)=4$ and $v$ is incident to two triangles; 
a vertex is {\it good} if it is not bad. 
Let $H=H(G)$ be the graph where $V(H)$ is the set of triangles of $G$ incident to at least one bad vertex and let $uv\in E(H)$ if and only if there is a bad vertex of $G$ that is incident to both triangles that correspond to $u$, $v$.

\begin{claim}\label{components}
Each component of $H$ is either a cycle or a tree. 
\end{claim}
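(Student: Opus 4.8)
The plan is to exploit two easy consequences of $G$ having no $4$-cycle, together with Lemma~\ref{reducible}. First, two distinct triangles of $G$ cannot share an edge (two triangles on an edge $xy$ yield a $4$-cycle through $x$ and $y$), so any two triangles of $G$ meet in at most one vertex; in particular the two triangles through a bad vertex meet only in that vertex. Hence the map sending a bad vertex $v$ to the edge of $H$ joining its two triangles is a bijection onto $E(H)$: it is onto by the definition of $H$, and one-to-one since two bad vertices mapping to the same edge $TT'$ would both lie in $T\cap T'$, contradicting $|T\cap T'|\le 1$. Consequently $d_H(T)$ equals the number of bad vertices lying on the triangle $T$, so $\Delta(H)\le 3$; in particular a component of $H$ in which every vertex has degree at most $2$ is a path or a cycle, hence a tree or a cycle. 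So it suffices to rule out a component containing both a cycle and a vertex of degree $3$.

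Next I would analyze an arbitrary cycle $T_1T_2\cdots T_kT_1$ of $H$ as a structure in $G$. For each $i$ (indices mod $k$) let $w_i$ be the bad vertex whose two triangles are $T_i$ and $T_{i+1}$, so $w_i\in T_i\cap T_{i+1}$, and write $T_i=\{w_{i-1},w_i,p_i\}$. The vertices $w_i$ are pairwise distinct (a coincidence would put a bad vertex on three triangles), consecutive ones are adjacent in $G$ (indeed $w_{i-1},w_i\in T_i$), and each is $4$-regular; thus $C:=w_1w_2\cdots w_kw_1$ is a cycle of $G$ with a $4$-regular vertex set. If $k=4$ then $C$ is a $4$-cycle, and if $k=3$ then $p_1,w_1,w_2,w_3$ are four distinct vertices forming a $4$-cycle (with edges $p_1w_1,w_1w_2,w_2w_3,w_3p_1$); hence $k\ge 5$. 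Moreover no $p_i$ lies on $C$: from $p_i=w_j$ we get $T_i\in\{T_j,T_{j+1}\}$ (the two triangles of the bad vertex $w_j$), which forces $i\in\{j,j+1\}$ and hence $p_i\in\{w_i,w_{i-1}\}$, a contradiction. Finally $C$ is induced in $G$: if $w_aw_b\in E(G)$ with $w_b\notin\{w_{a-1},w_{a+1}\}$, then since $N(w_a)=\{w_{a-1},p_a,w_{a+1},p_{a+1}\}$ (the neighbors of $w_a$ in its two triangles) we would have $w_b\in\{p_a,p_{a+1}\}$, contradicting $p_a,p_{a+1}\notin V(C)$.

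To conclude, I would argue by contradiction: assume a component $K$ of $H$ is neither a cycle nor a tree, and let $\mathcal C=T_1\cdots T_kT_1$ be a shortest cycle of $K$. By the previous paragraph $k\ge 5$, and a chord of $\mathcal C$ would create a strictly shorter cycle, so $\mathcal C$ is an induced subgraph of $K$, hence of $H$. Since $K$ is connected and $K\ne\mathcal C$, some $T_i$ on $\mathcal C$ satisfies $d_H(T_i)=3$ (otherwise every vertex of $\mathcal C$ would have degree $2$ in $K$ and $K$ would equal $\mathcal C$). The two edges of $\mathcal C$ at $T_i$ account for the bad vertices $w_{i-1}$ and $w_i$, so the third edge at $T_i$ forces the remaining vertex $p_i$ of $T_i$ to be bad, in particular $4$-regular. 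Now $p_i$ is adjacent to $w_{i-1}$ and $w_i$, and its other two neighbors lie on its second triangle $T'$; if one of them were a vertex $w_j$ of $C$, then $T'\in\{T_j,T_{j+1}\}$, so $T_iT'$ would be an edge of $H$ joining $T_i$ to a vertex of $\mathcal C$ other than $T_{i-1}$ or $T_{i+1}$ (it cannot be $T_{i\pm1}$, because $p_i\notin T_{i\pm1}$), i.e.\ a chord of $\mathcal C$, which is impossible. Hence $w_{i-1}$ and $w_i$ are the only neighbors of $p_i$ on $C$, so $S:=V(C)\cup\{p_i\}$ is a $4$-regular subset of $V(G)$ and $G[S]$ is precisely the cycle $C$ together with the triangle $p_iw_{i-1}w_i$, that is, a triangular cycle. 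This contradicts Lemma~\ref{reducible}, and the claim follows.

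I expect the last step to be the main obstacle: namely, controlling the ``extra'' edge at a triangle lying on a cycle of $H$. Passing to a \emph{shortest} cycle $\mathcal C$ is what makes this manageable, since such a cycle is automatically induced and, by the observation that $H$ contains no $3$- or $4$-cycles, has length at least $5$; consequently any extra edge at one of its triangles either creates a $4$-cycle in $G$ or completes the triangular-cycle configuration forbidden by Lemma~\ref{reducible}.
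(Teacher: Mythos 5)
Your proof is correct, and at bottom it pursues the same strategy as the paper's: show that a cycle in $H$ together with an extra vertex of $H$ hanging off it forces, inside $G$, exactly the $4$-regular triangular cycle forbidden by Lemma~\ref{reducible}. The paper's own proof is a three-sentence pointer to Lemma~\ref{reducible} and Figure~\ref{fig:tikz:helper}; your write-up supplies the verifications that the paper leaves implicit, and in doing so you handle some genuine subtleties. In particular, you pass to a \emph{shortest} cycle $\mathcal C$ of the offending component, which lets you (i) rule out chords of $\mathcal C$ (needed so that $p_i$ has no extra neighbors among the $w_j$'s and so that $G[S]$ is exactly the union of the cycle and one triangle, not something larger), and (ii) argue $k\ge 5$ so that the cycle $w_1\cdots w_k$ in $G$ is neither a triangle nor a $4$-cycle. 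You also verify directly from degree counts that $w_1\cdots w_k$ is an induced cycle in $G$ and that the set $S=V(C)\cup\{p_i\}$ is literally $4$-regular, both of which are required for Lemma~\ref{reducible} to apply. These are exactly the points where a reader of the paper's terse argument has to convince themselves the ``corresponding structure in $G$'' really is the reducible configuration; your preliminary observations (triangles of $G$ meet in at most one vertex, the bad-vertex/edge bijection, $\Delta(H)\le 3$) make that explicit. So: same route, but a fuller and self-contained execution of it.
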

\begin{proof}
Assume for the sake of contradiction that $H$ has a component $D$ with a cycle $C$ where $C$ is not the entire component. 
Let $v\in V(D)\setminus V(C)$ be a vertex that has a neighbor in $V(C)$.
The graph in $G$ that corresponds to this structure is forbidden by Lemma~\ref{reducible}, which is a contradiction. 
See Figure~\ref{fig:tikz:helper}.
\end{proof}

\begin{figure}[h]
	\begin{center}
		\input{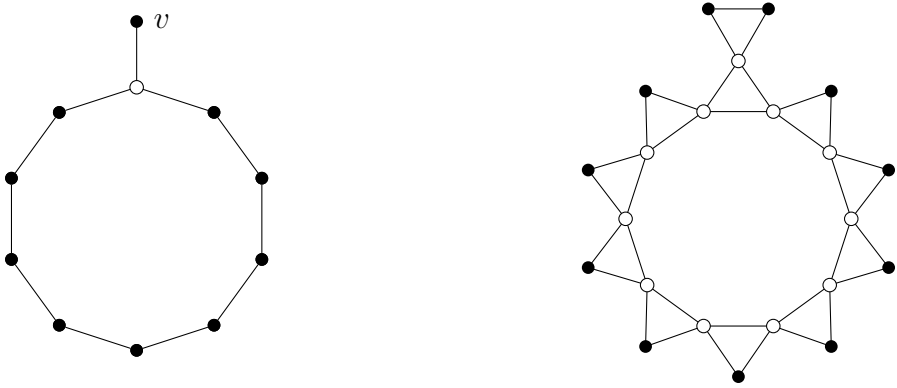}
	\end{center}
  \caption{The cycle $C$ in the proof of Claim~\ref{components} (left) and the corresponding graph in $G$ (right).
The white vertices do not have incident edges besides the ones drawn.
The black vertices may have other incident edges. }
  \label{fig:tikz:helper}
\end{figure}

Here is a lemma that will help later on. 

\begin{lemma}\label{tree-leaf}
Every $n$-vertex tree where with maximum degree $3$ has exactly $2$ more vertices of degree $1$ than vertices of degree $3$.
\end{lemma}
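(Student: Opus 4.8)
The plan is a straightforward double-counting argument, valid whenever the tree has at least two vertices (so that its maximum degree really is $1$, $2$, or $3$; I will read the hypothesis as ``maximum degree at most $3$'' and note the trivial one-vertex case separately if needed). Let $T$ be the tree, let $n=|V(T)|\ge 2$, and for $i\in\{1,2,3\}$ let $n_i$ be the number of vertices of $T$ of degree $i$. Since the maximum degree is at most $3$ and a tree has no isolated vertices when $n\ge 2$, every vertex has degree $1$, $2$, or $3$, so
\[
n_1+n_2+n_3=n.
\]

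Next I would use the two basic facts about trees and degrees: a tree on $n$ vertices has exactly $n-1$ edges, and the sum of the degrees equals twice the number of edges. Combining these gives
\[
n_1+2n_2+3n_3=\sum_{v\in V(T)}d(v)=2|E(T)|=2(n-1).
\]

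Finally, I would subtract twice the first displayed equation from the second. The $n_2$ terms cancel, yielding $n_3-n_1=2(n-1)-2n=-2$, i.e.\ $n_1=n_3+2$, which is exactly the claim. The only ``obstacle'' is the degenerate tree on a single vertex (maximum degree $0$), for which the statement is vacuous or should be excluded; in every application here the relevant trees are nontrivial, so this causes no difficulty. No induction or case analysis is needed.
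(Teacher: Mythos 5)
Your proof is correct and is essentially the paper's proof: both set up the two linear equations $n_1+n_2+n_3=n$ and $n_1+2n_2+3n_3=2(n-1)$ (from the handshake lemma and the fact that a tree on $n$ vertices has $n-1$ edges) and eliminate the degree-$2$ count to obtain $n_1=n_3+2$. Your remark about the trivial one-vertex tree is a sensible clarification that the paper omits but does not change the argument.
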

\begin{proof}
Let $z_i$ be the number of vertices of degree $i$. 
An $n$-vertex tree has $n-1$ edges and the sum of the degrees is twice the number of edges. 
Thus we have $n=z_1+z_2+z_3$ and $2(n-1)=z_1+2z_2+3z_3$. 
By eliminating $z_2$, we get $z_1=z_3+2$.
\end{proof}

\section{Discharging}

In this section, we will prove that $G$ cannot exist. 
Fix an embedding of $G$ and let $F(G)$ be the set of faces. 
We assign an \emph{initial charge} $\mu(z)$ to each $z\in V(G) \cup F(G)$, and then we will apply a discharging procedure to end up with {\it final charge} $\mu^*(z)$ at $z$.
We prove that the final charge has positive total sum, whereas the initial charge sum is at most zero.
The discharging process will preserve the total charge sum, and hence we find a contradiction to conclude that $G$ does not exist.

For every vertex $v\in V(G)$, let $\mu(v)=d(v)-6$, and for every face $f\in F(G)$, let $\mu(f)=2d(f)-6$. 
The total initial charge is zero since
\begin{align*}
\sum_{z\in V(G)\cup F(G)} \mu(z)
	=\sum_{v\in V(G)} (d(v)-6)+\sum_{f\in V(F)} (2d(f)-6) 
	=6|E(G)|-6|V(G)|-6|F(G)|
	\leq 0.
\end{align*}
The final equality holds by Euler's formula.

For the discharging procedure we introduce the notion of a {\it bank}, which serves as a placeholder for charges. 
For each component $D$ of the auxiliary graph $H(G)$, we will define a separate bank; let $b(D)$ denote the bank. 
We give each bank an initial charge of zero and we will show that either some vertex or some bank has positive final charge. 
The rest of this section will prove that the sum of the final charge after the discharging phase is positive. 

Recall that a vertex $v$ is {\it bad} if $d(v)=4$ and $v$ belongs to two triangles and a vertex is {\it good} if it is not bad. 
A good vertex $v$ is incident to a bank $b(D)$ if there is a vertex $u$ of $D$ where $v$ is incident to the triangle in $G$ that corresponds to $u$.
Note that each bad vertex of $G$ is an edge of $H(G)$. 

Here are the discharging rules:

\begin{enumerate}[(R1)]
\item Each face distributes its initial charge uniformly to each incident vertex. 

\item Each good vertex $v$ sends charge ${2\over 5}$ to each bank $b(D)$ each time $v$ is incident to $b(D)$. 

\item For each component $D$ of $H(G)$, the bank $b(D)$ sends charge $2\over 5$ to each bad vertex in $G$ that corresponds to an edge in $D$.

\end{enumerate}

It is trivial that each face has nonnegative final charge. 
Moreover, each face $f$ with $d(f)\geq 5$ sends charge ${\mu(f)\over d(f)}={2d(f)-6\over d(f)}\geq {4\over 5}$ to each incident vertex. 
We will first show that each vertex has nonnegative final charge.
Then we will show that either some bank or some vertex has positive final charge.

Note that since $G$ has no $4$-cycles, each vertex $v$ is incident to at most $\lfloor {d(v)\over 2}\rfloor$ triangles, and therefore at most $\lfloor{d(v)\over 2}\rfloor$ banks.

\begin{claim}\label{vertex}
Each vertex has nonnegative final charge.
Moreover, each vertex of degree at least $5$ has positive final charge.
\end{claim}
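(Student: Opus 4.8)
The plan is to bound, for a fixed vertex $v$ with $d=d(v)$, the charge it receives from incident faces via (R1) against the charge it can lose via (R2) or gain via (R3), splitting into cases according to whether $v$ is bad and according to the value of $d$. Two facts drive the argument. First, since $G$ has no $4$-cycles, any two triangles through $v$ are edge-disjoint (two triangles through $v$ sharing an edge at $v$ would close up a $4$-cycle), so $v$ lies on some number $k$ of triangles with $k\le\lfloor d/2\rfloor$. Second, since $G$ is $2$-connected with no $4$-cycles, every face is bounded by a cycle of length $3$ or of length at least $5$ — there are no digons and no $4$-faces. Hence, among the $d$ face-corners at $v$, at most $k$ of them lie in a triangular face, and every other corner lies in a face of length $\ell\ge 5$, which by (R1) passes $\tfrac{2\ell-6}{\ell}\ge\tfrac45$ to $v$ through that corner. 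Consequently (R1) gives $v$ total charge at least $\tfrac45(d-k)$.

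Next I would bound the (R2) and (R3) terms. Each bank $b(D)$ to which a good vertex $v$ sends charge, and more precisely each unit of $\tfrac25$ that $v$ pays out under (R2), is witnessed by a distinct triangle through $v$ lying in $V(H)$, since each triangle belongs to at most one component of $H$; so a good vertex pays out at most $\tfrac25 k$ in total under (R2). A bad vertex pays nothing under (R2) and, being a single edge of exactly one component $D$ of $H$, receives exactly $\tfrac25$ under (R3).

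The case analysis is then short. If $v$ is bad, then $d=4$ and $k=2$, so $\mu^*(v)\ge(4-6)+\tfrac45(4-2)+\tfrac25=0$. If $v$ is good, then $\mu^*(v)\ge(d-6)+\tfrac45(d-k)-\tfrac25 k=\tfrac95 d-6-\tfrac65 k$; using $k\le\lfloor d/2\rfloor$ this is at least $\tfrac65 d-6$, which is positive for $d\ge 6$, while for $d=5$ we have $k\le 2$ and the bound becomes $\tfrac35>0$. Finally, a good vertex of degree $4$ has $k\le 1$ (otherwise it would be bad, by definition), so $\mu^*(v)\ge\tfrac{36}{5}-6-\tfrac65=0$. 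This proves both assertions of the claim.

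The point requiring the most care is the (R1) bookkeeping: one must check that the number of triangular corners at $v$ is at most the number of triangles through $v$, that every non-triangular face incident to $v$ genuinely has length at least $5$ (using $2$-connectedness and the absence of $4$-cycles, and that the fixed embedding is cellular since $G$ is non-planar), and that charge is accounted per corner so that the estimate $\tfrac45(d-k)$ is legitimate. Once that is pinned down, the degree cases are routine arithmetic, the equalities occurring only at bad vertices and at good $4$-vertices lying on a single triangle that bounds a face.
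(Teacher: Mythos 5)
Your proof is correct and follows essentially the same approach as the paper: bound the charge received via (R1) by observing that since $4$-cycles are forbidden, triangles through $v$ are edge-disjoint, so at most $\lfloor d(v)/2\rfloor$ incident faces are triangles and the remaining incident faces (all of length at least $5$) each contribute at least $\frac45$; then bound the (R2)/(R3) transfers by the number of triangles through $v$. Your parameterization by $k$ (the number of triangles through $v$) merely makes the paper's per-degree case analysis slightly more systematic; the numerical inequalities obtained in each degree case ($d\ge 6$, $d=5$, good $d=4$, bad $d=4$) are identical to the paper's.
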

\begin{proof}
A vertex $v$ with $d(v)\geq 6$ has nonnegative initial charge and receives at least ${4\over 5}\cdot{d(v)\over 2}$ after (R1). 
Since $v$ is incident to at most $\lfloor{ d(v)\over 2}\rfloor$ incident banks, $\mu^*(v)\geq{4\over 5}\cdot{d(v)\over 2}-{2\over 5}\cdot\lfloor{d(v)\over 2}\rfloor>0$.
A vertex $v$ with $d(v)=5$ will receive charge from at least 3 incident faces and will give charge to at most 2 incident banks. 
Therefore, $\mu^*(v)\geq -1+3\cdot{4\over 5}-2\cdot{2\over 5}> 0$. 

A good vertex $v$ with $d(v)=4$ will receive charge from at least 3 faces and will give charge to at most 1 incident bank. 
Therefore, $\mu^*(v)\geq -2+3\cdot{4\over 5}-{2\over 5}= 0$. 
A bad vertex $v$ will receive charge at least ${4\over 5}$ from two faces and $2\over 5$ from exactly one bank. 
Therefore, $\mu^*(v)\geq -2+ 2\cdot{4\over 5}+{2\over 5}= 0$. 
\end{proof}

Given a component $D$ of $H(G)$, since an edge of $D$ corresponds to a bad vertex of $G$, we need to check that $b(D)$ has enough charge for each edge of $D$. 

\begin{claim}\label{cycle-bank}
Each bank $b(D)$ where $D$ is a cycle has nonnegative final charge. 
\end{claim}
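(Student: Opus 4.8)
The plan is to show that a bank $b(D)$ attached to a cyclic component $D$ of $H(G)$ in fact ends with final charge exactly $0$. Write $n$ for the number of vertices of the cycle $D$, so $D$ also has $n$ edges. The bank begins with charge $0$; by (R3) it hands out $\frac25$ to each of the $n$ bad vertices corresponding to its edges, for a total outflow of $\frac{2n}{5}$. Hence it is enough to prove that (R2) delivers at least $\frac{2n}{5}$ to $b(D)$, that is, that the good vertices of $G$ are incident to $b(D)$ at least $n$ times in total (counted with multiplicity, since (R2) charges once per incidence).

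First I would record two consequences of forbidding $4$-cycles: no two triangles of $G$ share an edge, and hence a bad vertex lies in exactly two triangles which meet in that single vertex. Now label the cyclic component $D = t_1 t_2 \cdots t_n t_1$, and for each $i$ let $v_i$ be a bad vertex witnessing the edge $t_i t_{i+1}$ (indices mod $n$); such a $v_i$ lies in $t_i$ and $t_{i+1}$ and in no other triangle. I would check that $v_{i-1}\neq v_i$ (otherwise that vertex would lie in the three triangles $t_{i-1}, t_i, t_{i+1}$, forcing $t_{i-1}=t_{i+1}$, i.e.\ a $2$-cycle in $H$, which the ``no shared edge'' fact rules out); in particular $n\geq 3$. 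So $t_i$ contains the two distinct bad vertices $v_{i-1}$ and $v_i$, and I let $w_i$ denote its third vertex.

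The key step is to show $w_i$ is a \emph{good} vertex. By Claim~\ref{components} a cyclic component is just a cycle, so $t_i$ has degree exactly $2$ in $H$. If $w_i$ were bad, it would lie in a second triangle $t'$, which is then a vertex of $H$ with $t_i t'\in E(H)$; since $t'\neq t_{i-1}, t_{i+1}$ (as $t_i$ and a neighbour in $D$ already share a bad vertex, and two distinct triangles cannot share two vertices without creating a $4$-cycle), this makes $t_i t'$ a third edge at $t_i$, a contradiction. Thus each triangle of $D$ has exactly one good vertex, and the good vertices incident to $b(D)$ are precisely $w_1, \ldots, w_n$. Counting incidences triangle by triangle gives exactly $\sum_{i=1}^{n}1 = n$, so (R2) sends $\frac{2n}{5}$ to $b(D)$ and $\mu^*(b(D)) = 0 + \frac{2n}{5} - \frac{2n}{5} = 0$.

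I expect the only real obstacle to be the structural step that $w_i$ is good: it genuinely needs Claim~\ref{components} (to know the component is $2$-regular, so a bad $w_i$ cannot simply raise the degree of $t_i$ in $H$) together with the $4$-cycle restriction in the guise ``no two triangles share an edge'' (to prevent the extra triangle $t'$ from coinciding with $t_{i-1}$ or $t_{i+1}$, and to rule out $2$-cycles/multi-edges in $H$). The remainder is routine accounting with the discharging rules.
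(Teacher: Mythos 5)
Your proposal is correct and follows the same approach as the paper: count that (R2) delivers $\frac{2n}{5}$ to $b(D)$ because each of the $n$ triangles of the cycle $D$ contributes one good-vertex incidence, matching the $\frac{2n}{5}$ spent under (R3). The paper compresses the key structural step---that a triangle corresponding to a degree-$2$ vertex of $H$ carries exactly one good vertex---into a single sentence, whereas you unpack it carefully via the no-$4$-cycle condition and Claim~\ref{components}; the underlying argument is the same.
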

\begin{proof}
Assume $D$ is a cycle $C$ with $n$ vertices. 
Since $D$ is a cycle, each triangle in $G$ that corresponds to a vertex in $D$ must be incident to one good vertex; each good vertex will send charge $2\over 5$ to $b(D)$. 
Thus, $b(D)$ receives charge ${2\over 5}n$ and there are $n$ edges in $D$ so $b(D)$ has nonnegative final charge.
\end{proof}

\begin{claim}\label{tree-bank}
Each bank $b(D)$ where $D$ is a tree has positive final charge. 
\end{claim}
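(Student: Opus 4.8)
The plan is to compare the charge that the bank $b(D)$ receives via (R2) with the charge it pays out via (R3), and show the former exceeds the latter by $\frac{6}{5}$. First I would record the needed structural facts. Every triangle of $G$ has exactly three vertices, so every vertex of $H(G)$ has degree at most $3$; hence $D$ is a tree of maximum degree at most $3$. Moreover every vertex of $H(G)$ is a triangle incident to a bad vertex, so $D$ has no isolated vertex, and therefore $D$ has $n\ge 2$ vertices and $n-1$ edges. Since each edge of $D$ corresponds to exactly one bad vertex of $G$, rule (R3) removes exactly $\frac{2}{5}(n-1)$ from $b(D)$.

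The heart of the proof is counting the charge into $b(D)$. Fix a triangle $u\in V(D)$ and examine its three vertices. If a vertex $w$ of $u$ is bad, then $w$ lies in exactly two triangles, one of them $u$, and $w$ is precisely the edge of $H(G)$ joining $u$ to the other triangle; since $u\in V(D)$, that other triangle also lies in $D$, so $w$ is an edge of $D$ incident to $u$. Two distinct bad vertices of $u$ give two distinct edges of $D$ at $u$, because two distinct triangles of $G$ cannot share two vertices (that would force a $4$-cycle), so $H(G)$ has no multiple edges. Consequently $u$ has exactly $\deg_D(u)$ bad vertices and $3-\deg_D(u)$ good vertices, each of the latter sending $\frac{2}{5}$ to $b(D)$ by (R2). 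Summing over $u$ and using $\sum_{u\in V(D)}\deg_D(u)=2(n-1)$, the bank receives $\frac{2}{5}\sum_{u\in V(D)}\bigl(3-\deg_D(u)\bigr)=\frac{2}{5}\bigl(3n-2(n-1)\bigr)=\frac{2}{5}(n+2)$. Equivalently, invoking Lemma~\ref{tree-leaf} and writing $z_i$ for the number of vertices of $D$ of degree $i$: a leaf of $D$ has $2$ good vertices and donates $\frac{4}{5}$, a degree-$2$ vertex donates $\frac{2}{5}$, and a degree-$3$ vertex donates $0$, so the total donated is $\frac{4}{5}z_1+\frac{2}{5}z_2=\frac{2}{5}(2z_1+z_2)=\frac{2}{5}(n+2)$ since $z_1=z_3+2$.

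Putting the two counts together, $\mu^*(b(D))=\frac{2}{5}(n+2)-\frac{2}{5}(n-1)=\frac{6}{5}>0$, which proves the claim. The one point that needs care is the middle step: one must check that each of the three vertices of a triangle of $D$ is accounted for exactly once, as an edge of $D$ if it is bad and as a $\frac{2}{5}$-donor to $b(D)$ via (R2) if it is good. That verification is precisely where simplicity of $H(G)$ and the hypothesis that $G$ has no $4$-cycle enter; once it is in hand, the arithmetic is immediate and there is no further obstacle.
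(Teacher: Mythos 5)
Your proof is correct and follows essentially the same route as the paper: count the charge entering $b(D)$ from good vertices on the triangles of $D$, count the charge $\tfrac{2}{5}(n-1)$ leaving to the bad vertices (edges of $D$), and show via Lemma~\ref{tree-leaf} that the difference is $\tfrac{6}{5}>0$. You additionally make explicit that a triangle of degree $k$ in $D$ has exactly $k$ bad vertices (using $C_4$-freeness to rule out parallel edges in $H$) and supply an equivalent handshake-lemma accounting, but these are refinements of the same argument rather than a different approach.
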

\begin{proof}
Assume $T$ has $n$ vertices. 
$T$ has maximum degree at most 3 since a triangle in $G$ cannot be incident to more than 3 bad vertices.
For $i\in[3]$, let $z_i$ be the number of vertices of degree $i$ in $T$.

Each triangle in $G$ that corresponds to a degree 1 vertex in $T$ is incident to 2 good vertices, and each triangle in $G$ that corresponds to a degree 2 vertex in $T$ is incident to 1 good vertex. 
Thus $b(T)$ gets charge ${4\over 5}z_1+{2\over 5}z_2$, and must spend ${2\over 5}|E(T)|={2\over 5}(n-1)$. 
Since $z_1=z_3+2$ by Lemma~\ref{tree-leaf}, it follows that ${4\over 5}z_1+{2\over 5}z_2={2\over 5}n+{4\over 5}>{2\over 5}n-{2\over 5}$.
Thus, $b(T)$ has positive final charge.
\end{proof}

If $H(G)$ has a component that is a tree $T$, then $b(T)$ has positive final charge.
If $H(G)$ has a component that is a cycle, then there exists a vertex of degree at least 5 in $G$, and by Claim~\ref{vertex}, this vertex has positive final charge. 
If $H(G)$ has no components, then there are no bad vertices, and we are done since either some bank or some vertex will have positive final charge. 

\section{Acknowledgment}

The authors thank Alexandr V. Kostochka and Bernard Lidick\'y for improving the readability of the paper. 

\bibliographystyle{plain}
\bibliography{Torus_NoC4_VxArbo}

%
%

\end{document}